\tikzstyle{block}=[draw opacity=0.7,line width=1.4cm]
\newcommand{\oprocendsymbol}{\hbox{$\bullet$}}
\newcommand{\oprocend}{\relax\ifmmode\else\unskip\hfill\fi\oprocendsymbol}
\newcommand{\VV}{\mathcal{V}}
\newcommand{\EE}{\mathcal{E}}
\newcommand{\GG}{\mathcal{G}}
\newcommand{\lL}{\vect{\mathsf{L}}}
\newcommand{\vect}[1]{\boldsymbol{\mathbf{#1}}}
\newcommand{\vectsf}[1]{\boldsymbol{\mathbf{\mathsf{#1}}}}
\newcommand{\Diag}[1]{\operatorname{Diag}(#1)}
\newtheorem{assumption}{Assumption}[section]
\newtheorem{theorem}{Theorem}[section]
\newtheorem{lemma}{Lemma}[section]
\DeclareMathAlphabet{\mathpzc}{OT1}{pzc}{m}{it}
\title{\LARGE \bf
Distributed Unconstrained Optimization with \\ Time-varying Cost~Functions
}
\author{Amir-Salar Esteki and Solmaz S. Kia \emph{Senior Member, IEEE} \vspace{1em}
        \thanks{The authors are with the Department of Mechanical and Aerospace Engineering, University of California Irvine, Irvine, CA 92697,
        {\tt\small \{aesteki,solmaz\}@uci.edu}. This work was supported by NSF award ECCS-1653838.}%
}
\begin{document}

\maketitle
\thispagestyle{empty}
\pagestyle{empty}


\begin{abstract}
In this paper, we propose a novel solution for the distributed unconstrained optimization problem where the total cost is the summation of time-varying local cost functions of a group networked agents. The objective is to track the optimal trajectory that minimizes the total cost at each time instant. Our approach consists of a two-stage dynamics, where the first one samples the first and second derivatives of the local costs periodically to construct an estimate of the descent direction towards the optimal trajectory, and the second one uses this estimate and a consensus term to drive local states towards the time-varying solution while reaching consensus. The first part is carried out by the implementation of a weighted average consensus algorithm in the discrete-time framework and the second part is performed with a continuous-time dynamics. Using the Lyapunov stability analysis, an upper bound on the gradient of the total cost is obtained which is asymptotically reached. This bound is characterized by the properties of the local costs. To demonstrate the performance of the proposed method, a numerical example is conducted that studies tuning the algorithm's parameters and their effects on the convergence of local states to the optimal trajectory.

\end{abstract}
\begin{IEEEkeywords}
Unconstrained Optimization, Distributed Optimization, Time-varying Optimization
\end{IEEEkeywords}

\section{INTRODUCTION}
In this paper, we consider the distributed time-varying unconstrained optimization problem, where a group of $N$ agents want to track the optimal solution that minimizes a total cost which is the summation of local time-varying costs. In recent years, the use of distributed optimization problems have attracted many applications \cite{chang2014multi, droge2014continuous, nedic2014distributed, gharesifard2013distributed}, e.g., sensor networks, smart grids, robotics~\cite{verscheure2009time} and \cite{ardeshiri2011convex}, and learning systems~\cite{koppel2015task}, due to the surge of multi-agent systems. One example is the time-varying distributed linear regression problem where the agents learn a model that best fits an ever changing stream of data, or see e.g., \cite{zhao1993training, feng1992time, ye2015distributed} for related works. In robotics, also, a group of connected robots aim to localize a moving object that can be cast as an optimization problem where local costs vary over time. To solve the time-varying optimization problem at hand, each agent needs to solve the following problem
\begin{align}\label{eq::problem}
    \vect{x}^{\star}(t):=\textup{argmin}_{\vect{x}\in\mathbb{R}^n}\frac{1}{N}\sum_{i=1}^{N}f^i(\vect{x},t), \quad t\geq0,
\end{align}
where $f^i$ represents agent $i$'s local cost and $\vect{x}\in\mathbb{R}^{n}$ is the decision variable. In case that there is a supervisor or a trusted third-party with the knowledge of all local $f^i$'s who can distribute the optimal trajectory to the agents online, a central solution can be implemented. Some prediction-correction-based algorithms both in continuous-time and discrete-time have been proposed as the central solution~\cite{simonetto2016class} to track the optimal trajectory $\vect{x}^{\star}$. By incorporating the second derivative of the costs (also called Hessian of the cost), i.e., $\nabla_{\vect{x}\vect{x}}f^i(\vect{x},t)$, this method asymptotically converges to the exact optimal trajectory.

Conversely, in many cases, due to privacy concerns or the distribution of data among multiple agents, employing a central solver is not feasible. Such settings require a distributed solver where agents are only allowed to communicate with their neighbors. Therefore, distributed algorithms have been proposed in the literature which take into account the limitations of a fully decentralized network. Some works consider network topology as the time-variant part of the problem \cite{rogozin2021accelerated, reisizadeh2022distributed, li2018accelerated} and some other, discussed below, consider the costs to vary over time. Authors of~\cite{rahili2016distributed} have addressed this problem by suggesting an algorithm where exact convergence is achieved in finite time by using the signum function, which limits the solution to the continuous-time setting. Moreover, agents need to compute the inverse of the Hessian locally at each time instant, which requires an $\mathcal{O}(n^3)$ computational complexity. In other works \cite{ling2013decentralized, chen2021distributed, maros2017admm}, using the alternating direction method of multipliers (ADMM), the authors propose a solution in the discrete-time framework for time-varying optimization problems. The approach used in \cite{maros2017admm} is inspired by the ADMM methods applied in static optimization problems, where asymptotic convergence is achieved if the objective function varies sufficiently low over time. Some other works, e.g. \cite{sun2017distributed} and \cite{ye2015distributed}, consider only optimization problems with time-varying quadratic costs, due to their popularity in applications such as economic dispatch. However, this limitation disallows them to be implemented for other classes of strongly-convex cost functions. Resource allocation problems are also a great part of economic dispatch that can be cast as constrained time-varying optimization problems; see \cite{esteki2022distributed, simonetto2018dual, fazlyab2017prediction, bai2018distributed, wang2020distributed, wang2022distributed} for related works.

In this paper, we address the distributed optimization problem with time-varying local costs in the continuous-time framework. Each agent communicates only with their neighbors and shares local information to solve this problem. By incorporating a weighted average consensus algorithm, agents track the descent direction towards the optimal trajectory and drive their local states to the time-varying optimal solution. This technique allows the agents to asymptotically converge to a neighborhood of the solution without using the signum function which prevents discrete-time implementations and also reduces the computational complexity to $\mathcal{O}(n^2)$. A Lyapunov stability analysis is conducted to prove convergence for strongly-convex and lipschitz-continuous local cost functions. In the numerical example section, we show the performance of the proposed method with different values of algorithm parameters.

\medskip
\emph{Notations:} We follow~\cite{bullo2009distributed} for graph theoretic terminologies. The interaction topology of $N$ in-network agents is modeled by the undirected connected graph $\mathcal{G}(\VV,\EE,\vect{A})$ where $\VV$ is the node set, $\EE\subset\VV\times \VV$ is the edge set and $\vect{A}=[\vectsf{a}_{ij}]$ is the adjacency matrix defined such that $\vectsf{a}_{ij}>0$, if $(i,j)\in\mathcal{E}$, otherwise $\vectsf{a}_{ij}=0$. A graph is undirected if $\vectsf{a}_{ij}=\vectsf{a}_{ji}$ for all $i,j\in\VV$. Moreover, a graph is connected if there is a directed path from every node to every other node. The degree of each node $i\in\VV$ is $\mathsf{d}^i=\sum_{j=1}^N \vectsf{a}_{ij}$ and the Laplacian matrix of a graph $\GG$ is  $\lL\!=\!\Diag{\mathsf{d}^1,\cdots, \mathsf{d}^N}-\vect{A}$. Furthermore, For a connected graph, we denote the eigenvalues of $\lL$ by $\lambda_1,\cdots,\lambda_N$, where $\lambda_1=0$ and $\lambda_i\leq\lambda_j$, for $i<j$ and $\lambda_2$ and $\lambda_N$ are, respectively, the smallest nonzero eigenvalue and maximum eigenvalue of $\lL$. Finally, given an edge $(i,j)$, $i$ is called a neighbor of $j$, and vice versa. We let $\vect{1}_{N}$ denote the vector of $N$ ones, and denote by $\vect{I}_{N}$ the $N\times N$ identity matrix. We also
define $\vect{\mathfrak{r}}=\frac{1}{\sqrt{N}}\vect{1}_{N}$,  $\vect{\mathfrak{R}}\in\mathbb{R}^{N\times (N-1)}$ and $\vectsf{T}=\begin{bmatrix}\vect{\mathfrak{r}}\quad\vect{\mathfrak{R}}\end{bmatrix}$, such that $\left[\begin{smallmatrix}\vect{\mathfrak{r}}&~\vectsf{\mathfrak{R}}\end{smallmatrix}\right]\left[\begin{smallmatrix}\vect{\mathfrak{r}}^\top\\\vect{\mathfrak{R}}^\top\end{smallmatrix}\right]=\left[\begin{smallmatrix}\vect{\mathfrak{r}}^\top\\\vect{\mathfrak{R}}^\top\end{smallmatrix}\right]\left[\begin{smallmatrix}\vect{\mathfrak{r}}&~\vectsf{\mathfrak{R}}\end{smallmatrix}\right]=\vect{I}_N$. Note that $\vectsf{T}^{\top}\vectsf{T}=\vectsf{T}\vectsf{T}^{\top}=\vect{I}$, and for a connected graph, $\vectsf{T}^{\top}\vectsf{L}\vectsf{T}=\begin{bmatrix}0 &\vect{0} \\ \vect{0} & \vectsf{L}^{+}\end{bmatrix}$, where $\vectsf{L}^{+}=\vect{\mathfrak{R}}^{\top}\vectsf{L}\vect{\mathfrak{R}}$. $\vectsf{L}^{+}$ is a positive definite matrix with eigenvalues $\{\lambda_i\}_{i=2}^{N}\in\mathbb{R}_{>0}$. For brevity and ease of presentation, the following notations are used alternatively: $\nabla_{\vect{x}^i\vect{x}^i}f^i_t\equiv\nabla_{\vect{x}^i\vect{x}^i}f^i(\vect{x}^i(t),t)$, $\nabla_{\vect{x}^i}f^i_t\equiv\nabla_{\vect{x}^i}f^i(\vect{x}^i(t),t)$ and $\nabla_{\vect{x}^i t}f^i_t\equiv\nabla_{\vect{x}^i t}f^i(\vect{x}^i(t),t)$.

\section{PROBLEM SETTING}
Our objective is to design an algorithm that drives local states towards the optimal trajectory, i.e., $\vect{x}^\star$ which is the solution of the unconstrained optimization problem \eqref{eq::problem}. The total cost is the summation of each agent $i$'s strongly-convex local costs $f^i$. In a fully decentralized setting, local costs are private information that are only available to the agent solely. Therefore, a distributed solution is presented in this paper to let agents track $\vect{x}^\star$. In this section, we provide the insights needed for proposing our novel algorithm. Let us first consider solving problem~\eqref{eq::problem} where the case is that the agents are aware of the local costs $f^i$, for all $i\in\mathcal{V}$. This can be done by implementing a central solver, e.g. using the prediction-correction method in~\cite{simonetto2016class}, where a descent direction drives the state $\vect{x}$ towards the optimal trajectory. Next, we discuss how this descent direction can be estimated distributively in a setting where agents are limited to communicate with their neighbors only. Inspired by the central solution in~\cite{simonetto2016class}, each agent can use the descent direction $\nabla_{\vect{x}\vect{x}}F^{-1}(\vect{x},t)(\nabla_{\vect{x}}F(\vect{x},t)+\nabla_{\vect{x}t}F(\vect{x},t))$ where $F(\vect{x}(t), t)=\frac{1}{N}\sum_{i=1}^{N}f^i(\vect{x}(t), t)$, to asymptotically converge to the solution. Therefore, by implementing the dynamics
\begin{align} \label{eq::decdircen}
    \dot{\vect{x}}^i(t)=&-\nabla_{\vect{x}\vect{x}}F^{-1}(\vect{x}^i(t), t)(\nabla_{\vect{x}}F(\vect{x}^i(t), t) \nonumber \\
    &+\nabla_{\vect{x}t}F(\vect{x}^i(t), t)),
\end{align}
all the states $\vect{x}^i$, for $i\in\VV$, converge to $\vect{x}^\star$ asymptotically. In a central manner, since all the states converge to the solution asymptotically, naturally, they also converge to a single trajectory. Therefore, consensus is achieved without any further manipulation. In the proposed method, we build an estimate of the global descent direction denoted as
\begin{align} \label{eq::dt}
    \vect{d}_t=(\sum_{i=1}^{N}\nabla_{\vect{x}^i\vect{x}^i}f^i_t)^{-1}(\sum_{i=1}^{N}\nabla_{\vect{x}^i}f^i_t+\nabla_{\vect{x}^i t}f^i_t),
\end{align}
by utilizing a weighted average consensus algorithm. However, \eqref{eq::dt} is different than the one in the dynamics~\eqref{eq::decdircen}. In the former, only the values of the first and second derivatives, calculated with local states as the input, are available to the agents, and in the latter, the total cost derivatives are available as functions to every agent and therefore, each agent can compute the exact local descent direction by using its local state as the input. Since the former is estimated in the proposed algorithm, agents converge to a single trajectory only in the case where initial conditions are similar globally. Therefore, we design a dynamics where a consensus term is added that reduces the difference between state values and minimize the total cost while reaching consensus in $\vect{x}^i$'s.

\section{MAIN RESULT} \label{sec::mainResult}
In this section, we introduce a novel algorithm that solves the distributed time-varying unconstrained optimization problem where the global cost is the summation of local cost functions. Following the previous section, agents of the network estimate the local descent direction by a weighted average consensus algorithm in discrete-time, and use this direction to converge to a neighborhood of the optimal solution while also trying to achieve consensus in their states. We propose the algorithm
\begin{subequations}\label{eq::mainAlg}
\begin{align}
    &\begin{cases}\label{eq::mainAlga}
    \vect{v}^i(k+1) = \vect{v}^i(k)+\delta_{c}\sum_{i=1}^{N}(\vect{p}^i(k)-\vect{p}^j(k)), \vspace{8pt}\\
    \vect{z}^i(k+1) = \vect{z}^i(k)-\delta_{c}\big(\vect{H}^i(k)\vect{p}^i(k)\!-\!\vect{g}^i(k)\! \vspace{5pt}\\
    -\vect{h}^i(k)\!+\!\sum_{i=1}^{N}(\vect{p}^i(k)\!-\!\vect{p}^j(k))\!+\!(\vect{v}^i(k)\!-\!\vect{v}^j(k))\big), \vspace{8pt}\\
    \vect{p}^i(k)=\vect{z}^i(k)+\vect{g}_s^i(k)+\vect{h}_s^i(k), \quad k\in\mathbb{Z}_{\geq0},
    \end{cases} \\
    &\dot{\vect{x}}^i(t)=-\vect{\psi}^i(t)-\sum_{i=1}^{N}(\vect{x}^i(t)-\vect{x}^j(t)), \quad t\geq0, \label{eq::mainAlgb} \\
    &\vect{v}^i(0), \vect{z}^i(0), \vect{x}^i(0)\in\mathbb{R}^{n},\quad i\in\VV. \nonumber
\end{align}
\end{subequations}
In this algorithm,
\begin{align*}
    &\vect{H}^i(k)=\nabla_{\vect{x}^i\vect{x}^i}f^i_{t_s}, \quad \vect{g}^i(k)=\nabla_{\vect{x}^i}f_{t_s}^i, \quad \vect{h}^i(k)=\nabla_{\vect{x}^i t}f_{t_s}^i, \\
    &\vect{\psi}^i(t)=\vect{p}^i(s\bar{k}),
\end{align*}
for $t\!\in\![t_{s},t_{s+1}), t_s\!=\!\delta_{t}s$ and $s\!=\!\{0,1,\cdots\}$ are switching signals that their roles is explained in the following. Here, \eqref{eq::mainAlga} constructs an estimate of the local descent direction by the state $\vect{p}^i(k)$ and~\eqref{eq::mainAlgb} drives the states using updates of $\vect{p}^i(k)$. These updates are passed to~\eqref{eq::mainAlgb} every $\bar{k}$ steps that~\eqref{eq::mainAlga} takes. While~\eqref{eq::mainAlga} takes $\bar{k}$ steps, \eqref{eq::mainAlgb} proceeds $\delta_s>0$ in time. Therefore, $\vect{\psi}^i(t)$ is a switching signal where at times $t_s$ is updated by $\vect{p}^i(s\bar{k})$, sampled from every $\bar{k}$ steps that~\eqref{eq::mainAlga} takes; $\vect{q}^i(t)$ is therefore constant in the time range $t\in[t_{s},t_{s+1})$ and is switched to the next value at each time instant $t_{s}$. Note that $\delta_{t}$ is the time span between the switchings of $\vect{\psi}^i(t)$. Moreover, at the same time instant $t=t_s$, the consensus Algorithm~\eqref{eq::mainAlga} updates its reference values $\vect{H}^i(k), \vect{g}^i(k)$ and $\vect{h}^i(k)$ using $\vect{x}^i(t=t_s)$ every $\bar{k}$ steps. With this mechanism, \eqref{eq::mainAlga} tracks the weighted average $(\sum_{i=1}^{N}\vect{H}^i(k))^{-1}(\sum_{i=1}^{N}\vect{g}^i(k)+\vect{h}^i(k))$ while~\eqref{eq::mainAlgb} uses the updates $\vect{p}^i(s\bar{k})$ from~\eqref{eq::mainAlga} to track the optimal trajectory $\vect{x}^{\star}(t)$.

The algorithm presented in~\eqref{eq::mainAlga} is inspired by a weighted average consensus algorithm in literature~\cite{YC-SSK:21} where by incorporating the local Hessian matrix $\vect{H}^i(k)$ as the weight and $\vect{g}^i(k)+\vect{h}^i(k)$ as the time-varying reference signal, each $\vect{p}^i$ converges to a neighborhood of the signal $(\sum_{i=1}^{N}\vect{H}^i(k))^{-1}(\sum_{i=1}^{N}\vect{g}^i(k)+\vect{h}^i(k))$. As noticed, this value is not exactly the one in~\eqref{eq::dt}; however, by considering some common assumptions, we characterize the error between the estimate and the actual value of $\vect{d}_t$ and show that by passing the updates $\vect{p}^i(s\bar{k})$ to~\eqref{eq::mainAlgb}, $\vect{x}^i$ converges to a neighborhood of the optimal trajectory. In order to prove convergence, some common conditions presented in e.g., \cite{simonetto2016class} and \cite{rahili2016distributed}, are required which are stated as following. The first assumption considers lower and upper bounds on the second derivative of the cost functions.
\begin{assumption} \label{ass::secDer}
Each local cost function $f^i(\vect{x}^i,t)$ is twice differentiable and uniformly in $t$. Also, $f^i$ is $m^i$-strongly convex and $l^i$-Lipschitz continuous, i.e.,
\begin{align*}
    m^i\vect{I}\leq\nabla_{\vect{x}^i\vect{x}^i}f^i_t\leq l^i\vect{I}, \quad \vect{x}^i\in\mathbb{R}^n, t\geq0.
\end{align*}
We also define $m=\textup{min}\{m^i\}$ and $l=\textup{max}\{l^i\}$ for $i\in\VV$.
\end{assumption}
The second assumption, considers bounds on the first derivatives of the cost functions.
\begin{assumption} \label{ass::firDer}
Local cost functions $f^i(\vect{x}^i,t)$ are sufficiently smooth in $\vect{x}^i$ and $t$, and the following bounds on the first derivatives of the local cost functions exist:
\begin{align*}
    ||\nabla_{\vect{x}^i}f^i_t||\leq C_0, \quad ||\nabla_{\vect{x}^i t}f^i_t||\leq C_1, \quad i\in\VV.
\end{align*}
where $C_0,C_1>0$.
\end{assumption}
By presenting the second assumption, it is also deduced that the variations of the first and second derivatives of the costs are bounded. Based on the requirements in Assumptions~\ref{ass::secDer} and~\ref{ass::firDer}, we can trivially calculate the bound on~\eqref{eq::dt} as $||\vect{d}_t||\leq C_d$, $t\geq0$, where $C
_d=\frac{1}{m}(C_0+C_1)$. This result is later used to characterize the bound of the tracking error.

Let us first examine the convergence of \eqref{eq::mainAlga} to a neighborhood of the local descent direction~\eqref{eq::dt}. We consider the following definitions for the proof of convergence. The weighted average to be tracked in~\eqref{eq::mainAlga} is $\bar{\vect{p}}(k)=(\sum_{i=1}^{N}\vect{H}^i(k))^{-1}(\sum_{i=1}^{N}\vect{g}^i(k)+\vect{h}^i(k))$ with its aggregated vector $\bar{\vectsf{p}}(k)=\bar{\vect{p}}(k)\otimes\vect{1}_N$. Again, $\bar{\vect{p}}(k)$ is updated every $\bar{k}$ steps which is equal to $\delta_t$ time in~\eqref{eq::mainAlgb}. Its variation over consecutive steps is $\Delta\bar{\vectsf{p}}(k)=\bar{\vectsf{p}}(k+1)-\bar{\vectsf{p}}(k)$. The diagonal matrix of Hessians is $\vectsf{H}(k)=\textup{diag}(\vect{H}^1(k),\cdots,\vect{H}^N(k))$. The gradient variations is defined as $\Delta\nabla\vectsf{f}(k)=\nabla\vectsf{f}(k+1)-\nabla\vectsf{f}(k)$ where $\nabla\vectsf{f}(k)$ is the aggregated vector of $\vect{g}^i(k)+\vect{h}^i(k)$ for $i\in\VV$; and finally, $\vectsf{w}(k)=\nabla\vectsf{f}(k)-\vectsf{H}(k)\bar{\vectsf{p}}(k)$ and $\Delta\vectsf{w}(k)=\vectsf{w}(k+1)-\vectsf{w}(k)$. Using the definitions above, the compact form of~\eqref{eq::mainAlga} is
\begin{subequations}\label{eq::comCons}
\begin{align}
    \vectsf{v}(k+1)&=\vectsf{v}(k)+\delta_c\vectsf{L}\vectsf{p}(k), \label{eq::comConsa} \\
    \vectsf{z}(k+1)&=\vectsf{z}(k)\!-\!\delta_c\big(\vectsf{H}(k)\vectsf{p}(k)\!-\!\nabla\vectsf{f}(k)\!+\!\vectsf{L}(\vectsf{p}(k)\!+\!\vectsf{v}(k)), \label{eq::comConsb} \\
    \vectsf{p}(k)&=\vectsf{z}(k)+\nabla\vectsf{f}(k). \label{eq::comConsc}
\end{align}
\end{subequations}
Using the change of variable $\bar{\vectsf{e}}=\vectsf{T}^{\top}(\vectsf{p}-\bar{\vectsf{p}})$ and $\begin{bmatrix}\vect{q}_1 \quad \vectsf{q}_{2:N}^{\top}\end{bmatrix}^{\top}=\vectsf{T}^{\top}(\vectsf{L}\vectsf{v}-\vectsf{w})$, \eqref{eq::comCons} is equivalent to
\begin{subequations} \label{eq::comConsTrans}
\begin{align}
    \vect{q}_1(k+1)&=\vect{q}_1(k), \\
    \footnotesize\begin{bmatrix}
    \bar{\vectsf{e}}(k+1) \\
    \vectsf{q}_{2:N}(k+1)
    \end{bmatrix}
    \footnotesize&\!=\!
    \footnotesize(\vect{I}\!+\!\delta_c\bar{\vectsf{A}}(k))
    \begin{bmatrix}
    \bar{\vectsf{e}}(k) \\
    \vectsf{q}_{2:N}(k)
    \end{bmatrix}\!+\!\bar{\vectsf{B}}
    \begin{bmatrix}
    \Delta\nabla\vectsf{f}(k)\!-\!\Delta\bar{\vectsf{p}}(k) \\
    \Delta\vectsf{w}(k)
    \end{bmatrix}
\end{align}
\end{subequations}
where $\footnotesize\bar{\vectsf{A}}(k)=\begin{bmatrix}&-\!\vectsf{T}^{\top}\!\otimes\!\vect{I}(\vectsf{H}(k)\!+\!\vectsf{L}\!\otimes\!\vect{I})\vectsf{T}\!\otimes\!\vect{I} \quad &-\!\begin{bmatrix}\vect{0} \\ \vect{I}_{n(N\!-\!1)}\end{bmatrix} \\ &\begin{bmatrix}\vect{0} \quad \vectsf{L}^{+}\vectsf{L}^{+}\!\otimes\vect{I} \end{bmatrix} \quad &\vect{0} \end{bmatrix}$ and $\footnotesize\bar{\vectsf{B}}~\!=\!~\begin{bmatrix}\vectsf{T}^{\top} \quad \vect{0} \\ \vect{0} \quad \vect{\mathfrak{R}}^{\top}\end{bmatrix}\otimes\vect{I}$. We now obtain the admissible step size $\delta_c$ to prove the internal stability of~\eqref{eq::comConsTrans}, i.e., the matrix $\vect{I}+\delta_c\bar{\vectsf{A}}(k)$ is Schur for $k=\{0,1,2,\cdots\}$.
\begin{lemma} \label{lem::delta}
    Under the Assumption~\eqref{ass::secDer} and by the virtue of the results in~\cite[Lemma 2]{YC-SSK:21} and~\cite[Lemma 3]{YC-SSK:21},
    if $\delta_c\in(0,\bar{\delta})$ in which $\bar{\delta}=\textup{min}\Big\{\{-2\frac{\textup{Re}(\gamma_{i,k})}{|\gamma_{i,k}|^2}\}_{i=1}^{2N-1}\Big\}_{k\in\mathbb{Z}_{\geq0}}$ where $\{\gamma_{i,k}\}_{i=1}^{2N-1}$ are the set of eigenvalues of $\bar{\vectsf{A}}(k)$, then every subsystem $\vect{I}+\delta_c\bar{\vectsf{A}}(k)$, $k\in\mathbb{Z}_{\geq0}$ is Schur. Moreover, we define $\phi=\textup{max}\{\|\vect{I}+\delta_c\bar{\vectsf{A}}(k)\|\}_{k\in\mathbb{Z}_{\geq0}}$, where we know that $\phi<1$.
\end{lemma}
The proof is present in~\cite{YC-SSK:21}. The result above, provided internal stability for the transformed algorithm in~\eqref{eq::comConsTrans}. We now seek a bound on the error between the trajectories of $\vect{p}^i(k)$ and the weighted average $\bar{\vect{p}}(k)$. Since the first derivatives of the local costs are bounded according to Assumption~\ref{ass::firDer}, $\vect{p}^i(k)$'s converge to a neighborhood of the weighted average with a maximum error characterized in the next result.
\begin{theorem} \label{thm::thm1}
    Let the agents of an undirected connected graph $\mathcal{G}$, implement~\eqref{eq::comCons} where the first gradients of local costs satisfy Assumption~\ref{ass::firDer}.
    Considering the result in Lemma~\ref{lem::delta}, i.e., $\delta_c\in(0,\bar{\delta})$, we have
    \begin{align} \label{eq::pbound}
        ||\vectsf{p}(k)-\bar{\vectsf{p}}(k)||\leq N\bar{C}\frac{1+\phi}{1-\phi^2}.
    \end{align}
    where $\bar{C}$ is defined in~\eqref{eq::Cbar}.
\end{theorem}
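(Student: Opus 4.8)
The plan is to treat the transformed dynamics \eqref{eq::comConsTrans} as a discrete-time linear time-varying system driven by an exogenous input, and to exploit the uniform contraction factor $\phi<1$ from Lemma~\ref{lem::delta} together with a uniform bound on that input to obtain a geometric-series estimate of the state norm. The first observation is that the scalar block $\vect{q}_1(k+1)=\vect{q}_1(k)$ is invariant and decoupled, so it plays no role and may be discarded; the error of interest lives entirely in the remaining block. Writing $\vectsf{s}(k)=\begin{bmatrix}\bar{\vectsf{e}}(k)^\top & \vectsf{q}_{2:N}(k)^\top\end{bmatrix}^\top$ and $\vectsf{u}(k)=\bar{\vectsf{B}}\begin{bmatrix}(\Delta\nabla\vectsf{f}(k)-\Delta\bar{\vectsf{p}}(k))^\top & \Delta\vectsf{w}(k)^\top\end{bmatrix}^\top$, the recursion reads $\vectsf{s}(k+1)=(\vect{I}+\delta_c\bar{\vectsf{A}}(k))\,\vectsf{s}(k)+\vectsf{u}(k)$. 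Because $\vectsf{T}$ is orthogonal, and so is $\bar{\vectsf{B}}$ (its blocks $\vectsf{T}^\top$ and $\vect{\mathfrak{R}}^\top$ have orthonormal rows, so $\|\bar{\vectsf{B}}\|\le1$), the change of variable preserves norms, giving $\|\vectsf{p}(k)-\bar{\vectsf{p}}(k)\|=\|\bar{\vectsf{e}}(k)\|\le\|\vectsf{s}(k)\|$; hence it suffices to bound $\|\vectsf{s}(k)\|$.

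Next I would unroll the recursion by the discrete variation-of-constants formula,
\[
\vectsf{s}(k)=\Big(\prod_{j=0}^{k-1}(\vect{I}+\delta_c\bar{\vectsf{A}}(j))\Big)\vectsf{s}(0)+\sum_{j=0}^{k-1}\Big(\prod_{l=j+1}^{k-1}(\vect{I}+\delta_c\bar{\vectsf{A}}(l))\Big)\vectsf{u}(j),
\]
take norms, and invoke submultiplicativity together with the uniform bound $\|\vect{I}+\delta_c\bar{\vectsf{A}}(k)\|\le\phi$ supplied by Lemma~\ref{lem::delta}. This yields $\|\vectsf{s}(k)\|\le\phi^k\|\vectsf{s}(0)\|+\sum_{j=0}^{k-1}\phi^{k-1-j}\|\vectsf{u}(j)\|$. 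Provided the input is uniformly bounded, say $\sup_j\|\vectsf{u}(j)\|\le N\bar{C}$, the transient term $\phi^k\|\vectsf{s}(0)\|$ vanishes and the geometric sum $\sum_{m=0}^{k-1}\phi^m$ is dominated by $1/(1-\phi)$, so that $\limsup_{k\to\infty}\|\vectsf{s}(k)\|\le N\bar{C}/(1-\phi)$; rewriting $1/(1-\phi)=(1+\phi)/(1-\phi^2)$ produces exactly \eqref{eq::pbound}.

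The main obstacle is establishing the uniform input bound $\|\vectsf{u}(k)\|\le N\bar{C}$, i.e. deriving the constant $\bar{C}$ of \eqref{eq::Cbar}. Since $\|\bar{\vectsf{B}}\|\le1$, this reduces to bounding the per-step variations $\Delta\nabla\vectsf{f}(k)$, $\Delta\bar{\vectsf{p}}(k)$ and $\Delta\vectsf{w}(k)$. I would bound $\|\Delta\nabla\vectsf{f}(k)\|$ using the smoothness and the first-derivative bounds $C_0,C_1$ of Assumption~\ref{ass::firDer} over the finite sampling window (recall that $\bar{\vect{p}}$ is refreshed every $\bar{k}$ steps, equivalently every $\delta_t$ in time), then bound $\|\Delta\bar{\vectsf{p}}(k)\|$ through the definition $\bar{\vect{p}}=(\sum_i\vect{H}^i)^{-1}(\sum_i\vect{g}^i+\vect{h}^i)$ by combining the strong-convexity lower bound $Nm\vect{I}\le\sum_i\vect{H}^i$ of Assumption~\ref{ass::secDer}, which controls the inverse, with the gradient variation just obtained, and finally bound $\|\Delta\vectsf{w}(k)\|=\|\Delta(\nabla\vectsf{f}-\vectsf{H}\bar{\vectsf{p}})(k)\|$ using boundedness and Lipschitz continuity of the Hessian blocks together with the estimate for $\Delta\bar{\vectsf{p}}$. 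Collecting these three estimates and accounting for the aggregation of the $N$ agent blocks yields the single constant $N\bar{C}$; this bookkeeping, rather than the stability argument, is the delicate part of the proof.
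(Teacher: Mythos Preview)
Your argument is correct and yields exactly the bound in \eqref{eq::pbound}, but it follows a different route from the paper. The paper frames the proof as a discrete-time Lyapunov analysis: it sets $V(k)=\|\vectsf{y}(k)\|^2$ with $\vectsf{y}=[\bar{\vectsf{e}}^\top\ \vectsf{q}_{2:N}^\top]^\top$, expands $\Delta V(k)=V(k+1)-V(k)$, and arrives at the quadratic inequality $\Delta V(k)\le(\phi^2-1)\|\vectsf{y}(k)\|^2+2\phi N\bar{C}\,\|\vectsf{y}(k)\|+N^2\bar{C}^2$, whose positive root is precisely $N\bar{C}(1+\phi)/(1-\phi^2)$. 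Your variation-of-constants approach is more direct: taking norms of the one-step recursion immediately gives $\|\vectsf{s}(k+1)\|\le\phi\|\vectsf{s}(k)\|+N\bar{C}$, which you then unroll as a geometric series. In fact the two computations coincide---completing the square in the paper's Lyapunov inequality yields $\sqrt{V(k+1)}\le\phi\sqrt{V(k)}+N\bar{C}$, i.e.\ exactly your scalar recursion---so your method is the same estimate stripped of the Lyapunov packaging. Your version has the minor advantage of making the asymptotic nature of the bound explicit via the $\limsup$, whereas the paper leaves this implicit; both proofs rest on the same two ingredients, the uniform contraction $\|\vect{I}+\delta_c\bar{\vectsf{A}}(k)\|\le\phi<1$ from Lemma~\ref{lem::delta} and the input bound $N\bar{C}$, and both defer the bookkeeping behind $\bar{C}$ to Assumptions~\ref{ass::secDer} and~\ref{ass::firDer}.
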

\begin{proof}
    To prove convergence of $\vectsf{p}(k)$ to a neighborhood of the weighted average $\bar{\vectsf{p}}(k)$, we use the Lyapunov stability analysis. Let us consider the transformed dynamics~\eqref{eq::comConsTrans} and define the Lyapunov function as a quadratic product of the states
    \begin{align*}
        V(k)=\begin{bmatrix}\bar{\vectsf{e}}(k) \\
    \vectsf{q}_{2:N}(k)
    \end{bmatrix}^{\top}\begin{bmatrix}\bar{\vectsf{e}}(k) \\
    \vectsf{q}_{2:N}(k)
    \end{bmatrix}.
    \end{align*}
    Here, we seek to prove that the variation of the Lyapunov function $V(k)$ at each step is negative and obtain a bound on the convergence error. Therefore, by defining $\Delta V(k)=V(k+1)-V(k)$, we have
    \begin{align*}
        \Delta V(k)&=\vectsf{y}^{\top}(k)(\vect{I}+\delta_c\bar{\vectsf{A}}(k))^{\top}(\vect{I}+\delta_c\bar{\vectsf{A}}(k))\vectsf{y}(k) \\
        &+2\vectsf{y}^{\top}(k)(\vect{I}\!+\!\delta_c\bar{\vectsf{A}}(k))^{\top}\bar{\vectsf{B}}\begin{bmatrix}
        \Delta\nabla\vectsf{f}(k)\!-\!\Delta\bar{\vectsf{p}}(k) \\
        \Delta\vectsf{w}(k)
        \end{bmatrix} \\
        &+\!\begin{bmatrix}
        \Delta\nabla\vectsf{f}(k)\!-\!\Delta\bar{\vectsf{p}}(k) \\
        \Delta\vectsf{w}(k)
        \end{bmatrix}^{\top}\bar{\vectsf{B}}\!^{\top}\bar{\vectsf{B}}\begin{bmatrix}
        \Delta\nabla\vectsf{f}(k)\!-\!\Delta\bar{\vectsf{p}}(k) \\
        \Delta\vectsf{w}(k)
        \end{bmatrix} \\
        &-\vectsf{y}^{\top}(k)\vectsf{y}(k),
    \end{align*}
    where $\vectsf{y}=\begin{bmatrix}\bar{\vectsf{e}} \\
    \vectsf{q}_{2:N}
    \end{bmatrix}$ is the aggregated vector of the states. Based on the results we have from Lemma~\ref{lem::delta}, by choosing the step size in the admissible range $\delta_c\in(0,\bar{\delta})$, then it is true that $\vect{I}+\delta_c\bar{\vectsf{A}}(k)$ is Schur. By incorporating the bounds $||\bar{\vectsf{B}}||<1$ and the ones in Assumptions~\ref{ass::secDer} and~\ref{ass::firDer} and their results, we can write
    \begin{align*}
        \Delta V(k)&\leq(\phi^2-1)\|\vectsf{y}(k)\|^2+2\phi N\bar{C}\|\vectsf{y}(k)\|+N^2\bar{C}^2,
    \end{align*}
    where
    \begin{align} \label{eq::Cbar}
        \bar{C}=8(C_0+C_1)+2(1+2l)C_d.
    \end{align}
    Since $\phi<1$, according to the inequality above, if the norm $||\vectsf{y}||$ is sufficiently large, the variation of the Lyapunov function becomes negative. In addition, if $\Delta V(k)<0$, then $||\vectsf{y}||$ decreases. Therefore, the value $||\vectsf{y}||$ is bounded. By using the Lyapunov stability analysis, we find that $||\vectsf{y}||\leq N\bar{C}\frac{1+\phi}{1-\phi^2}$. Given that $||\vectsf{p}(k)-\bar{\vectsf{p}}(k)||\leq\|\vectsf{y}(k)\|$, we can establish the tracking error in~\eqref{eq::pbound}.
\end{proof}
In the statement above, we obtained an upper bound on the error between the trajectories $\vectsf{p}(k)$ and the weighted average $\bar{\vectsf{p}}(k)$. Let us now consider $\vectsf{\Psi}=[\vect{\psi}^{1\top},\cdots,\vect{\psi}^{N\top}]^\top$ (aggregated vector of $\vect{\psi}^i$) which is samples of $\vect{p}^i(k)$ for every $\bar{k}$ steps. Before we establish the final result, we want to characterize a bound on the error between $\vectsf{\psi}(t)$ and $\vectsf{d}_t=\vect{d}_t\otimes\vect{1}_N$ denoted as
\begin{align} \label{eq::epsilon}
    \vectsf{\epsilon}_{t}=\vectsf{\Psi}(t)-\vectsf{d}_t.
\end{align}
Trivially, by the virtue of Theorem~\ref{thm::thm1} and the fact that $||\bar{\vectsf{p}}(k)-\vectsf{d}_{t}||\leq2NC_d$ (as a result of Assumption~\ref{ass::firDer}) for any $t\geq0$ and $k\in\mathbb{Z}_{\geq0}$, we can conclude that $||\vectsf{q}(t)-\vectsf{d}_{t}||=||\vectsf{\epsilon}_{t}||\leq N\bar{\epsilon}$ where $\bar{\epsilon}=2C_d+\bar{C}\frac{1+\phi}{1-\phi^2}$. Let us now present the final statement.
\begin{theorem} \label{thm::thm2}
    Let the agents of an undirected connected graph $\mathcal{G}$ implement the Algorithm~\eqref{eq::mainAlg} to track the optimal trajectory $\vect{x}^{\star}(t)$, the solution of the unconstrained optimization problem~\eqref{eq::problem}. Provided that $\delta_c\in(0,\bar{\delta})$, we can prove that the gradient of the total cost asymptotically converges to a neighborhood of the origin with the bound
    \begin{align} \label{eq::gradientBound}
        ||\frac{1}{N}\sum_{i=1}^{N}\nabla_{\vect{x}^i}f^i(\vect{x}^i(t),t)-\frac{1}{N}\sum_{i=1}^{N}\nabla_{\vect{x}}f^i(\vect{x}^{\star}(t),t)||\leq C_{\nabla}\bar{\epsilon}
    \end{align}
    where $C_{\nabla}$ is defined in~\eqref{eq::Cnabla}.
\end{theorem}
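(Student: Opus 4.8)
The plan is to prove Theorem~\ref{thm::thm2} by a continuous-time Lyapunov analysis on the aggregated state dynamics~\eqref{eq::mainAlgb}, reducing the gradient bound to the size of a tracking error $\vectsf{x}(t)-\vectsf{x}^\star(t)$ that is in turn controlled by the consensus-estimation error $\vectsf{\epsilon}_t$ already bounded by $N\bar\epsilon$ in the discussion preceding the theorem. First I would write the aggregated form of~\eqref{eq::mainAlgb} as $\dot{\vectsf{x}}(t)=-\vectsf{\Psi}(t)-(\vectsf{L}\otimes\vect{I})\vectsf{x}(t)$ and substitute $\vectsf{\Psi}(t)=\vectsf{d}_t+\vectsf{\epsilon}_t$ using~\eqref{eq::epsilon}. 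The key structural fact to exploit is that $\vectsf{d}_t$ is the aggregated exact descent direction, so $-\vectsf{d}_t-(\vectsf{L}\otimes\vect{I})\vectsf{x}$ is precisely the stabilizing central dynamics~\eqref{eq::decdircen} augmented by a consensus term, with $\vectsf{\epsilon}_t$ entering as a bounded disturbance of norm at most $N\bar\epsilon$.

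Next I would introduce the error coordinate $\Tvect{x}(t)=\vectsf{x}(t)-\vectsf{x}^\star(t)$, where $\vectsf{x}^\star=\vect{x}^\star\otimes\vect{1}_N$ is the aggregated optimal trajectory, and form the candidate $V(t)=\tfrac12\Tvect{x}^\top\Tvect{x}$. Differentiating gives $\dot V = \Tvect{x}^\top(\dot{\vectsf{x}}-\dot{\vectsf{x}}^\star)$; I would then use strong convexity from Assumption~\ref{ass::secDer} to extract a negative-definite term of the form $-m\|\Tvect{x}\|^2$ (or a related coercive bound coming from the monotonicity of the gradient map together with the positive semidefiniteness of $\vectsf{L}\otimes\vect{I}$ on the consensus-orthogonal subspace), while the disturbance $\vectsf{\epsilon}_t$ and the optimal-trajectory drift $\dot{\vectsf{x}}^\star$ contribute cross terms bounded via Cauchy--Schwarz. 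The drift term $\dot{\vectsf{x}}^\star$ is itself bounded by $C_d$ (equivalently by $\tfrac1m(C_0+C_1)$) using the implicit-function expression for $\dot{\vect x}^\star$ together with Assumptions~\ref{ass::secDer}--\ref{ass::firDer}. Collecting these yields an inequality of the shape $\dot V\le -2m V + \kappa\sqrt{V}\,(N\bar\epsilon)$ for a constant $\kappa$ built from $l$, $m$, $\lambda_N$ and the dimension factors, from which a comparison lemma gives an asymptotic bound $\limsup_{t\to\infty}\|\Tvect{x}(t)\|\le c\,\bar\epsilon$.

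Finally I would convert the state-tracking bound into the gradient bound~\eqref{eq::gradientBound}. Here I would invoke Lipschitz continuity (the $l$-bound in Assumption~\ref{ass::secDer}) of each $\nabla_{\vect{x}^i}f^i$ to write $\|\tfrac1N\sum_i\nabla_{\vect{x}^i}f^i(\vect{x}^i,t)-\tfrac1N\sum_i\nabla_{\vect{x}}f^i(\vect{x}^\star,t)\|\le \tfrac{l}{N}\sum_i\|\vect{x}^i-\vect{x}^\star\|\le \tfrac{l}{\sqrt N}\|\Tvect{x}\|$, and then substitute the asymptotic bound on $\|\Tvect{x}\|$ to read off the constant $C_\nabla$ defined in~\eqref{eq::Cnabla}. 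I expect the main obstacle to be the hybrid sampled-data nature of the scheme: $\vectsf{\Psi}(t)$ is held constant on each interval $[t_s,t_{s+1})$ and only equals the consensus estimate at the sampling instants, so the disturbance bound $\|\vectsf{\epsilon}_t\|\le N\bar\epsilon$ must be justified uniformly across the zero-order-hold intervals, and the interaction between the discrete-time convergence of~\eqref{eq::comCons} (Theorem~\ref{thm::thm1}) and the continuous-time flow must be handled so that the intersample growth of $V$ does not destroy the contraction. Managing this timescale separation — ensuring $\delta_t$ and $\bar k$ are compatible so the held estimate stays within the claimed neighborhood — is the delicate step; the strong-convexity and Lipschitz estimates themselves are routine once the disturbance is uniformly bounded.
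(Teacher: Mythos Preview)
Your plan departs from the paper's argument in the choice of Lyapunov function, and that departure creates a genuine gap. The paper does \emph{not} work with the state error $\Tvect{x}=\vectsf{x}-\vectsf{x}^\star$; it takes
\[
V_t=(\vect{1}_N^{\top}\nabla_{\vectsf{x}}\vectsf{f}_t)^2+\alpha\,\vectsf{x}_t^{\top}\vectsf{L}\vectsf{x}_t,
\]
i.e.\ it measures directly the quantity appearing in~\eqref{eq::gradientBound} together with a consensus penalty. The reason this choice works is an algebraic identity built into the definition~\eqref{eq::dt}: because $\vect{d}_t=(\sum_i\nabla_{\vect{x}^i\vect{x}^i}f^i_t)^{-1}\sum_i(\nabla_{\vect{x}^i}f^i_t+\nabla_{\vect{x}^it}f^i_t)$, one has $\vect{1}_N^{\top}\vectsf{H}_t(\vect{d}_t\otimes\vect{1}_N)=\vect{1}_N^{\top}(\nabla_{\vectsf{x}}\vectsf{f}_t+\nabla_{\vectsf{x}t}\vectsf{f}_t)$, and substituting this into $\dot V_t$ immediately produces the clean negative term $-(\vect{1}_N^{\top}\nabla_{\vectsf{x}}\vectsf{f}_t)^2$. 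The remaining cross terms with $\vectsf{\epsilon}_t$ and $\vectsf{L}\vectsf{x}$ are then absorbed by completion of squares with free parameters $\beta,\gamma$, which is where the specific constant~\eqref{eq::Cnabla} comes from.

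Your proposal instead asserts that ``$-\vectsf{d}_t-(\vectsf{L}\otimes\vect{I})\vectsf{x}$ is precisely the stabilizing central dynamics~\eqref{eq::decdircen} augmented by a consensus term.'' That is the step that fails. In~\eqref{eq::decdircen} each agent evaluates \emph{all} gradients and Hessians at its own $\vect{x}^i$; in~\eqref{eq::dt} the single scalar/vector $\vect{d}_t$ mixes Hessians and gradients evaluated at \emph{different} local states $\vect{x}^1,\dots,\vect{x}^N$. Consequently there is no monotonicity relation of the form $\Tvect{x}^{\top}\vectsf{d}_t\ge m\|\Tvect{x}\|^2$ to invoke, and the hoped-for inequality $\dot V\le -2mV+\kappa\sqrt{V}\,N\bar\epsilon$ does not follow from strong convexity alone. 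To repair this you would have to split $\Tvect{x}$ into its consensus and disagreement components, bound the mismatch between $\sum_i\nabla f^i(\vect{x}^i)$ and $\sum_i\nabla f^i(\bar{\vect{x}})$ by the disagreement via Lipschitz continuity, and then close the loop --- essentially reconstructing the coupling that the paper's Lyapunov function handles in one stroke. Even if that program succeeds, it would yield a different constant than~\eqref{eq::Cnabla}, so you would not recover the theorem exactly as stated. Your remarks about the sampled-data issue are reasonable but secondary; the structural misidentification of $\vectsf{d}_t$ is the main obstacle.
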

\begin{proof}
   To simplify the presentation, we demonstrate the proof for when the cost functions are univariant, i.e., the decision variable $\vect{x}^i$ is scalar and therefore, $x^i\in\mathbb{R}$, $i\in\VV$. Also, $\vectsf{H}_t$, $\nabla_{\vectsf{x}}\vectsf{f}_t$ and $\nabla_{\vectsf{x} t}\vectsf{f}_t$ are aggregated matrix and vectors of the local Hessians and first derivatives of costs $f^i(\vect{x}^i(t),t)$. We implement the results from Theorem~\ref{thm::thm1} and use a Lyapunov stability analysis in the continuous-time framework to prove convergence. Consider the dynamics~\eqref{eq::mainAlgb} which can be presented in the compact form
   \begin{align} \label{eq::dynCom}
       \dot{\vectsf{x}}(t)=-\vectsf{\Psi}(t)-\vectsf{L}\vectsf{x}(t).
   \end{align}
   Let us define the Lyapunov function
   \begin{align*}
       V(\vectsf{x}(t),t)=(\vect{1}_N^{\top}\nabla_{\vectsf{x}}\vectsf{f}_t)^2+\alpha\vectsf{x}^{\top}(t)\vectsf{L}\vectsf{x}(t),
   \end{align*}
   where $\alpha>0$ is a positive scalar; for the rest of the proof, we use $\vectsf{x}_t$ and $V_t$ as replacements of $\vectsf{x}(t)$ and $V(\vectsf{x}(t),t)$, respectively. Under the assumption~\eqref{ass::firDer}, the Lyapunov function is bounded by the states $\vectsf{x}$ in the dynamics~\eqref{eq::dynCom} by $0\leq V_t\leq C_1^2+\alpha\lambda_N||\vectsf{x}_t||^2$. Taking the derivative of the Lyapunov function, we have
   \begin{align*}
       \dot{V}_t=\nabla_{\vectsf{x}}\vectsf{f}_t^\top\vect{1}_N\vect{1}_N^{\top}(\vectsf{H}_t\dot{\vectsf{x}}_t+\nabla_{\vectsf{x}t}\vectsf{f}_t)+\alpha\vectsf{x}_t^{\top}\vectsf{L}\dot{\vectsf{x}}_t,
   \end{align*}
   where by substituting $\dot{\vectsf{x}}_t$, $\vectsf{\Psi}$ and $\vect{d}_t$ by the equations~\eqref{eq::dynCom}, \eqref{eq::epsilon} and \eqref{eq::dt}, respectively, we get
   \begin{align*}
       \dot{V}_t&=-\nabla_{\vectsf{x}}\vectsf{f}_t^{\top}\vect{1}_N^{\top}\vect{1}_N\nabla_{\vectsf{x}}\vectsf{f}_t-\nabla_{\vectsf{x}}\vectsf{f}_t^{\top}\vect{1}_N\vect{1}_N^{\top}\vectsf{H}_t\vectsf{\epsilon}_{t} \\
       &-\nabla_{\vectsf{x}}\vectsf{f}_t^{\top}\vect{1}_N\vect{1}_N^{\top}\vectsf{H}_t\vectsf{L}\vectsf{x}_t-\alpha\vectsf{x}_t^{\top}\vectsf{L}\vectsf{\epsilon}_{t}-\alpha\vectsf{x}_t^{\top}\vectsf{L}\vectsf{x}_t.
   \end{align*}
   By subtraction and addition of similar terms, the equality above can be rewritten as
   \begin{align*}
       \dot{V}_t&=-\nabla_{\vectsf{x}}\vectsf{f}_t^{\top}\vect{1}_N^{\top}\vect{1}_N\nabla_{\vectsf{x}}\vectsf{f}_t
       -\nabla_{\vectsf{x}}\vectsf{f}_t^{\top}\vect{1}_N\vect{1}_N^{\top}\vectsf{H}_t\vectsf{\epsilon}_{t} \\
       &-\frac{1}{2}||\frac{1}{\sqrt{\beta}}\vectsf{H}_t\vect{1}_N\vect{1}_N^{\top}\nabla_{\vectsf{x}}\vectsf{f}_t+\sqrt{\beta}\vectsf{L}\vectsf{x}_t||^2 \\
       &+\frac{1}{2\beta}\nabla_{\vectsf{x}}\vectsf{f}_t^{\top}\vect{1}_N\vect{1}_N^{\top}\vectsf{H}_t^{2}\vect{1}_N\vect{1}_N^{\top}\nabla_{\vectsf{x}}\vectsf{f}_t+\frac{\beta}{2}\vectsf{x}_t^{\top}\vectsf{L}^{2}\vectsf{x}_t \\
       &-\frac{\alpha}{2}||\frac{1}{\sqrt{\gamma}}\vectsf{\epsilon}_{t}+\sqrt{\gamma}\vectsf{L}\vectsf{x}_t||^2+\frac{\alpha}{2\gamma}\vectsf{\epsilon}_{t}^{\top}\vectsf{\epsilon}_{t}+\frac{\alpha\gamma}{2}\vectsf{x}_t^{\top}\vectsf{L}^{2}\vectsf{x}_t \\
       &-\alpha\vectsf{x}_t^{\top}\vectsf{L}^{2}\vectsf{x}_t,
   \end{align*}
   where $\beta,\gamma>0$. Using the Assumptions~\eqref{ass::secDer} and~\eqref{ass::firDer}, we can derive the inequality
   \begin{align*}
       \dot{V}_t&\leq-(1-\frac{l^2}{2\beta})||\vect{1}_N^{\top}\nabla_{\vectsf{x}}\vectsf{f}_t^{\top}||^2
       +lN\bar{\epsilon}||\vect{1}_N^{\top}\nabla_{\vectsf{x}}\vectsf{f}_t^{\top}||+\frac{\alpha N^2}{2\gamma}\bar{\epsilon}^2 \\
       &-(\alpha-\frac{\alpha\gamma+\beta}{2})\vectsf{x}_t^{\top}\vectsf{L}\vectsf{x}_t-\frac{\alpha}{2}||\frac{1}{\sqrt{\gamma}}\vectsf{\epsilon}_{t}+\sqrt{\gamma}\vectsf{L}\vectsf{x}_t||^2 \\
       &-\frac{1}{2}||\frac{1}{\sqrt{\beta}}\vectsf{H}_t\vect{1}_N\vect{1}_N^{\top}\nabla_{\vectsf{x}}\vectsf{f}_t+\sqrt{\beta}\vectsf{L}\vectsf{x}_t||^2,
   \end{align*}
   in which, for stability, we have the requirements $\alpha<\frac{l^2}{4},\beta>\frac{l^2}{2}, \gamma<2-\frac{l^2}{2\alpha}$ which is always feasible. Based on the inequality above, we can conclude that the summation of the gradients, asymptotically converges to a neighborhood of the origin with the bound
   \begin{align*}
       ||\frac{1}{N}\vect{1}_N^{\top}\nabla_{\vectsf{x}}\vectsf{f}_t^{\top}||\leq\frac{l+\sqrt{l^2+\frac{\alpha}{\gamma}(2-\frac{l^2}{\beta})}}{2-\frac{l^2}{\beta}}\bar{\epsilon},
   \end{align*}
   where by defining
   \begin{align} \label{eq::Cnabla}
       C_{\nabla}=\frac{l+\sqrt{l^2+\frac{\alpha}{\gamma}(2-\frac{l^2}{\beta})}}{2-\frac{l^2}{\beta}},
   \end{align}
   and using the fact that $\frac{1}{N}\sum_{i=1}^{N}\nabla_{\vect{x}}f^i(\vect{x}^{\star}(t),t)=0$ we get the final result in~\eqref{eq::gradientBound}.
\end{proof}

\section{NUMERICAL EXAMPLE}
\begin{figure}[t]
    \centering
    {\includegraphics[trim= 1pt 5pt 16pt 5pt ,clip,width=.6\linewidth]{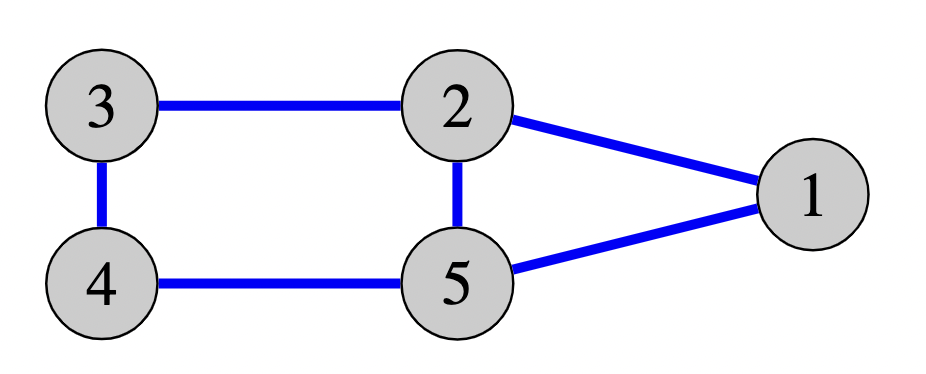}}
    \caption{\small{The graph indicated above is an undirected connected graph with adjacency weights of $\vectsf{a}_{ij}=1$, if $(i,j)\in\mathcal{E}$, otherwise $\vectsf{a}_{ij}=0$. Each agent $i\in\{1,2,\cdots,5\}$ is endowed with a local cost $f^i(x^i(t),t)=i(x^i(t))^2+\textup{sin}(i\omega t)x^i(t)$.}}
    \label{fig::fig0}
\end{figure}
To demonstrate the performance of the proposed method, we study convergence of Algorithm~\eqref{eq::mainAlg} in presence of local time-varying costs. Many problems such as ones in economic dispatch or linear regression are formulated as optimization problems with quadratic costs of the form $f(x(t),t)=\frac{1}{2}a(t)(x(t))^2+b(t)x(t)+c(t)$. In many cases, for example, due to parameter oscillations of local generators in economic dispatch or updates in local data sets in linear regression, the problem at hand is time-varying, and consequently, agents are required to track a time-variant solution rather than converging to a single minimum point. Therefore, we consider solving an unconstrained quadratic optimization problem in the following of this section.

Let a network of $N=5$ agents interact with each other to solve problem~\eqref{eq::problem}. The topology of the network is an undirected connected graph illustrated in Fig.~\ref{fig::fig0}. Each agent is endowed with a local cost
\begin{align*}
    f^i(x^i(t),t)=i(x^i(t))^2+\textup{sin}(i\omega t)x^i(t),
\end{align*}
where $\omega=0.05$ controls the frequency of the time-varying costs. Trivially, one can solve the problem $x^{\star}(t)=\textup{argmin}_{x}\frac{1}{N}\sum_{i=1}^{5}i(x(t))^2+\textup{sin}(i\omega t)x(t)$ analytically and derive the optimal solution as the time-varying trajectory $x^{\star}(t)=-\frac{1}{2}\frac{\sum_{i=1}^{5}\textup{sin}(i\omega t)}{\sum_{i=1}^{5}i}$. The objective is to implement Algorithm~\eqref{eq::mainAlg} to track $x^{\star}(t)$ with different values of $\bar{k}$ and observe its effect on the convergence error. In this example, $\delta_t=0.1$ is fixed. We set $\bar{k}=\{1,2,5,10\}$ and measure the state values $x^i(t)$ and the tracking error, defined as $e(t)=||\vectsf{x}(t)-x^{\star}(t)\vect{1}_5||_2$, in the range $t\in[0,T]$ with $T=50$. Moreover, to review the overall effect on convergence, a separate figure is plotted to show the average of the tracking error $\bar{e}=\frac{1}{T}\int_{\tau=0}^{\tau=T} e(\tau)$ over the time span $t\in[0,T]$, for each case of $\bar{k}$.

By plotting the states $x^i(t)$ over $t\in[0,T]$, for $i\in\{1,2,\cdots,5\}$, we can observe that convergence to the optimal trajectory has improved. While using $\bar{k}=1$ the agents can still track the optimal solution, increasing $\bar{k}$ to $5$ and $10$ results in more consensus between the states at each time instant $t$. Evidently, we can see from Fig.\ref{fig::fig2}(a) that convergence error is reduced when $\bar{k}=10$ compared to when $\bar{k}=\{1,2\}$. Intuitively, as $\bar{k}$ grows, agents obtain a more accurate estimate of the current descent direction $\vect{d}^i_t$ and drive their local states towards a tighter neighborhood of the optimal trajectory. As a result, Fig.~\ref{fig::fig2}(b) indicates that the average tracking error $\bar{e}$ is lower in cases with higher values of $\bar{k}$.

\begin{figure}[!t]
    \centering
    {\includegraphics[trim= 1pt 5pt 16pt 5pt ,clip,width=\linewidth]{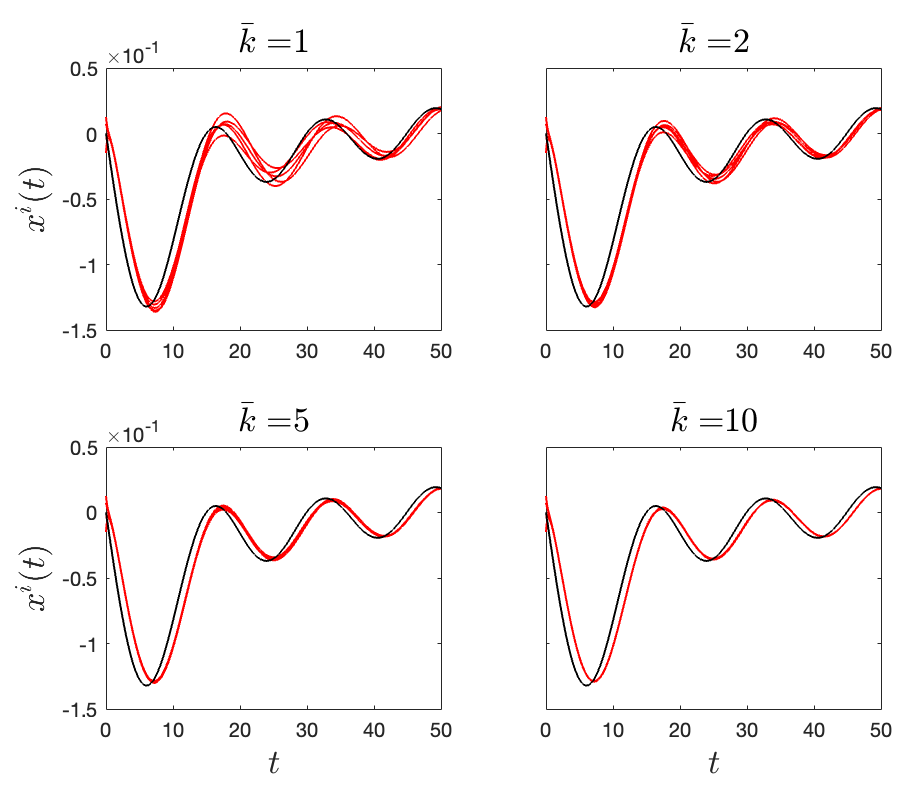}}
    \caption{\small{Trajectories of the states $x^i(t)$ for $i\in\{1,2,\cdots,5\}$ are shown above for different values of $\bar{k}$. As $\bar{k}$ increases, there is more consensus between the states $x^i(t)$ and the optimal trajectory is tracked more accurately.}}
    \label{fig::fig1}
\end{figure}

\begin{figure}[!t]
    \centering
    {\includegraphics[trim= 1pt 5pt 16pt 5pt ,clip,width=\linewidth]{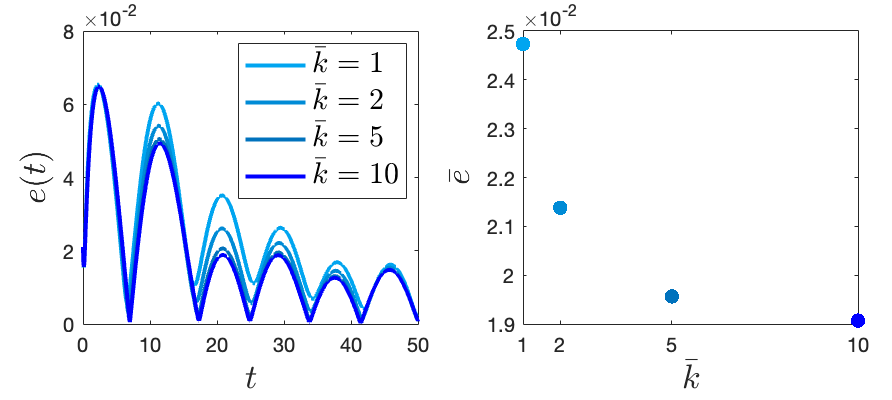}}
    \caption{\small{(a) Left figure: Represents the tracking error in the time interval $t\in[0,T]$ for $\bar{k}=\{1,2,5,10\}$. The error oscillates due to the nature of the local cost functions, however, when $\bar{k}$ is higher, there is less tracking error at each time instant. (b) Right figure: Represents the average tracking error $\bar{e}$ for $\bar{k}=\{1,2,5,10\}$. A similar conclusion is depicted here.}}
    \label{fig::fig2}
\end{figure}

\section{CONCLUSION}
We proposed a method to solve the distributed unconstrained optimization problem. In this setting, the total cost to be optimized consists of time-varying local costs that each agent of a network is endowed with and therefore, the solution is an optimal trajectory rather than a minimum point. In our approach, we implemented the discrete-time version of a weighted average consensus algorithm to derive an estimate of the descent direction, and constructed a continuous-time dynamics where this estimate was used to drive local states towards the optimal trajectory while reaching consensus. Under some common assumptions and with the use of the Lyapunov stability analysis, a bound on the asymptotic tracking error of the total cost gradient was achieved. To show the effect of the parameters used in the algorithm, a numerical example was provided where convergence to the optimal trajectory was studied with different values of these parameters.

\bibliographystyle{ieeetr}

\begin{thebibliography}{10}

\bibitem{chang2014multi}
T.-H. Chang, M. Hong, and X. Wang, “Multi-agent distributed
optimization via inexact consensus admm,” IEEE Transactions on
Signal Processing, vol. 63, no. 2, pp. 482–497, 2014.

\bibitem{droge2014continuous}
G. Droge, H. Kawashima, and M. B. Egerstedt, “Continuous-
time proportional-integral distributed optimisation for networked
systems,” Journal of Control and Decision, vol. 1, no. 3, pp. 191–
213, 2014.

\bibitem{nedic2014distributed}
A. Nedic and A. Olshevsky, “Distributed optimization over time-
varying directed graphs,” IEEE Transactions on Automatic Control,
vol. 60, no. 3, pp. 601–615, 2014.

\bibitem{gharesifard2013distributed}
B. Gharesifard and J. Cortes, “Distributed continuous-time convex
optimization on weight-balanced digraphs,” IEEE Transactions on
Automatic Control, vol. 59, no. 3, pp. 781–786, 2013.

\bibitem{verscheure2009time}
D. Verscheure, B. Demeulenaere, J. Swevers, J. De Schutter, and
M. Diehl, “Time-optimal path tracking for robots: A convex optimization approach,” IEEE Transactions on Automatic Control,
vol. 54, no. 10, pp. 2318–2327, 2009.

\bibitem{ardeshiri2011convex}
T. Ardeshiri, M. Norrlof, J. Lofberg, and A. Hansson, “Convex
optimization approach for time-optimal path tracking of robots with
speed dependent constraints,” IFAC Proceedings Volumes, vol. 44,
no. 1, pp. 14648–14653, 2011.

\bibitem{koppel2015task}
A. Koppel, G. Warnell, and E. Stump, “Task-driven dictionary learn-
ing in distributed online settings,” in 2015 49th Asilomar Conference
on Signals, Systems and Computers, pp. 1114–1118, IEEE, 2015.

\bibitem{zhao1993training}
Y. Zhao and W. Lu, “Training neural networks with time-varying
optimization,” in Proceedings of 1993 International Conference on
Neural Networks (IJCNN-93-Nagoya, Japan), vol. 2, pp. 1693–1696,
IEEE, 1993.

\bibitem{feng1992time}
C. Feng and Y. Zhao, “Time-varying nonlinear programming and
its realization via neural networks,” in 1992 American Control
Conference, pp. 978–982, IEEE, 1992.

\bibitem{ye2015distributed}
M. Ye and G. Hu, “Distributed seeking of time-varying nash equilib-
rium for non-cooperative games,” IEEE Transactions on Automatic
Control, vol. 60, no. 11, pp. 3000–3005, 2015.

\bibitem{simonetto2016class}
A. Simonetto, A. Mokhtari, A. Koppel, G. Leus, and A. Ribeiro,
“A class of prediction-correction methods for time-varying convex
optimization,” IEEE Transactions on Signal Processing, vol. 64,
no. 17, pp. 4576–4591, 2016.

\bibitem{rogozin2021accelerated}
A. Rogozin, M. Bochko, P. Dvurechensky, A. Gasnikov, and
V. Lukoshkin, “An accelerated method for decentralized distributed
stochastic optimization over time-varying graphs,” in 2021 60th IEEE
Conference on Decision and Control (CDC), pp. 3367–3373, IEEE,
2021.

\bibitem{reisizadeh2022distributed}
H. Reisizadeh, B. Touri, and S. Mohajer, “Distributed optimization
over time-varying graphs with imperfect sharing of information,”
IEEE Transactions on Automatic Control, 2022.

\bibitem{li2018accelerated}
H. Li, Q. Lu, X. Liao, and T. Huang, “Accelerated convergence
algorithm for distributed constrained optimization under time-varying
general directed graphs,” IEEE Transactions on Systems, Man, and
Cybernetics: Systems, vol. 50, no. 7, pp. 2612–2622, 2018.

\bibitem{rahili2016distributed}
S. Rahili and W. Ren, “Distributed continuous-time convex opti-
mization with time-varying cost functions,” IEEE Transactions on
Automatic Control, vol. 62, no. 4, pp. 1590–1605, 2016.

\bibitem{ling2013decentralized}
Q. Ling and A. Ribeiro, “Decentralized dynamic optimization
through the alternating direction method of multipliers,” IEEE Trans-
actions on Signal Processing, vol. 62, no. 5, pp. 1185–1197, 2013.

\bibitem{chen2021distributed}
Z. Chen, P. Yi, L. Li, and Y. Hong, “Distributed time-varying
convex optimization with dynamic quantization,” IEEE Transactions
on Cybernetics, 2021.

\bibitem{maros2017admm}
M. Maros and J. Jalden, “Admm for distributed dynamic beamform-
ing,” IEEE Transactions on Signal and Information Processing over
Networks, vol. 4, no. 2, pp. 220–235, 2017.

\bibitem{sun2017distributed}
C. Sun, M. Ye, and G. Hu, “Distributed time-varying quadratic
optimization for multiple agents under undirected graphs,” IEEE
Transactions on Automatic Control, vol. 62, no. 7, pp. 3687–3694,
2017.

\bibitem{esteki2022distributed}
A.-S. Esteki and S. S. Kia, “Distributed optimal resource allocation
with time-varying quadratic cost functions and resources over switch-
ing agents,” in 2022 European Control Conference (ECC), pp. 441–
446, IEEE, 2022.

\bibitem{simonetto2018dual}
A. Simonetto, “Dual prediction–correction methods for linearly
constrained time-varying convex programs,” IEEE Transactions on
Automatic Control, vol. 64, no. 8, pp. 3355–3361, 2018.

\bibitem{fazlyab2017prediction}
M. Fazlyab, S. Paternain, V. M. Preciado, and A. Ribeiro,
“Prediction-correction interior-point method for time-varying convex
optimization,” IEEE Transactions on Automatic Control, vol. 63,
no. 7, pp. 1973–1986, 2017.

\bibitem{bai2018distributed}
L. Bai, C. Sun, Z. Feng, and G. Hu, “Distributed continuous-
time resource allocation with time-varying resources under quadratic
cost functions,” in 2018 IEEE Conference on Decision and Control
(CDC), pp. 823–828, IEEE, 2018.

\bibitem{wang2020distributed}
B. Wang, S. Sun, and W. Ren, “Distributed continuous-time algo-
rithms for optimal resource allocation with time-varying quadratic
cost functions,” IEEE Transactions on Control of Network Systems,
vol. 7, no. 4, pp. 1974–1984, 2020.

\bibitem{wang2022distributed}
B. Wang, S. Sun, and W. Ren, “Distributed time-varying quadratic
optimal resource allocation subject to nonidentical time-varying hes-
sians with application to multiquadrotor hose transportation,” IEEE
Transactions on Systems, Man, and Cybernetics: Systems, 2022.

\bibitem{bullo2009distributed}
F. Bullo, J. Cortes, and S. Martinez, Distributed control of robotic
networks. Princeton University Press, 2009.

\bibitem{YC-SSK:21}
Y.-F. Chung and S. S. Kia, “Dynamic active average consensus,”
vol. 5, no. 4, pp. 1177–1182, 2021.

\end{thebibliography}

\end{document}